%%%%%%%%%%%%%%%%%%%%%%%%%%%%%%%%%%%%%%%%%%%%%%%%%%%%%%%%%%%%%%%%%%%%%%%%%%%%%%
%%%%%%%%%%%%%%%%%%%%%%%%%%%%%%%%%%%%%%%%%%%%%%%%%%%%%%%%%%%%%%%%%%%%%%%%%%%%%%
%%                                                                          %%
%%                                                   Wroc{\l}aw, June 2019  %%
%%                                                                          %%
%%                                                                          %%
%%                                                                          %%
%%              LINEAR-TIME GEOMETRIC ALGORITHMS FOR EVALUATING             %%
%%                             B\'{E}ZIER CURVES                            %% 
%%                              (third revision)                            %%
%%                                                                          %%
%%                                                                          %%
%%                       Pawe{\l} Wo\'{z}ny, Filip Chudy                    %%
%%                                                                          %%
%%                                                                          %%
%%%%%%%%%%%%%%%%%%%%%%%%%%%%%%%%%%%%%%%%%%%%%%%%%%%%%%%%%%%%%%%%%%%%%%%%%%%%%%
%%%%%%%%%%%%%%%%%%%%%%%%%%%%%%%%%%%%%%%%%%%%%%%%%%%%%%%%%%%%%%%%%%%%%%%%%%%%%%

\documentclass[final,11pt,twocolumn]{elsarticle}

\usepackage{verbatim,a4wide}

\usepackage{amsmath}
\usepackage{amssymb}
\usepackage{amsthm}

\usepackage{bm}

\usepackage{algorithm}
\usepackage{algpseudocode}

%\topmargin-2cm
%\textwidth19cm
%\textheight24cm
%\oddsidemargin-1cm
%\evensidemargin-1.5cm

%\topmargin-2cm
%\textwidth18cm
%\textheight23cm
%\oddsidemargin-0.75cm
%\evensidemargin-1.5cm

%%%%%%%%%%%%%%%%%%

\usepackage[cp1250]{inputenc}
\usepackage[OT4]{fontenc}
\usepackage[english]{babel}

%%%%%%%%%%%%%%%%%%

\numberwithin{equation}{section}
\numberwithin{algorithm}{section}
\numberwithin{figure}{section}
\numberwithin{table}{section}

%%%%%%%%%%%%%%%%%%

\newtheorem{theorem}{Theorem}[section]
\newdefinition{remark}[theorem]{Remark}
\newdefinition{example}[theorem]{Example}

\newcommand{\p}[1]{\mbox{\textsf{#1}}}

%%%%%%%%%%%%%%%%%%

\journal{Elsevier} 

%%%%%%%%%%%%%%%%%%

\begin{document}

%%%%%%%%%%%%%%%%%%%%%%%%%%%%%%%%%%%%%%%%%%%%%%%%%%%%%%%%%%%%%%%%%%%%%%%%%%%%%%
%%%%%%%%%%%%%%%%%%%%%%%%%%%%%%%%%%%%%%%%%%%%%%%%%%%%%%%%%%%%%%%%%%%%%%%%%%%%%%
%%%%%%%%%%%%%%%%%%%%%%%%%%%%%%%%%%%%%%%%%%%%%%%%%%%%%%%%%%%%%%%%%%%%%%%%%%%%%%
%%%%%%%%%%%%%%%%%%%%%%%%%%%%%%%%%%%%%%%%%%%%%%%%%%%%%%%%%%%%%%%%%%%%%%%%%%%%%%
\begin{frontmatter}

%%%%%%%%%%
\title{Linear-time geometric algorithm for evaluating B\'{e}zier curves}
%%%%%%%%%%

\author{Pawe{\l} Wo\'{z}ny\corref{cor}}
\ead{Pawel.Wozny@cs.uni.wroc.pl}
\cortext[cor]{Corresponding author. Fax {+}48 71 3757801}

\author{Filip Chudy}
\ead{Filip.Chudy@cs.uni.wroc.pl}

\address{Institute of Computer Science, University of Wroc{\l}aw,
         ul.~Joliot-Curie 15, 50-383 Wroc{\l}aw, Poland}

\begin{abstract}
A new algorithm for computing a point on a polynomial or rational curve in
B\'{e}zier form is proposed. The method has a~geometric interpretation and uses 
only convex combinations of control points. The new algorithm's computational
complexity is linear with respect to the number of control points and its
memory complexity is $O(1)$. Some remarks on similar methods for surfaces in
rectangular and triangular B\'{e}zier form are also given.
\end{abstract}

\begin{keyword}
Bernstein polynomials; B\'{e}zier curves; B\'{e}zier surfaces; 
Convex hull property; Geometric algorithms; Linear complexity.
\end{keyword}

\end{frontmatter}
%%%%%%%%%%%%%%%%%%%%%%%%%%%%%%%%%%%%%%%%%%%%%%%%%%%%%%%%%%%%%%%%%%%%%%%%%%%%%%
%%%%%%%%%%%%%%%%%%%%%%%%%%%%%%%%%%%%%%%%%%%%%%%%%%%%%%%%%%%%%%%%%%%%%%%%%%%%%%
%%%%%%%%%%%%%%%%%%%%%%%%%%%%%%%%%%%%%%%%%%%%%%%%%%%%%%%%%%%%%%%%%%%%%%%%%%%%%%
%%%%%%%%%%%%%%%%%%%%%%%%%%%%%%%%%%%%%%%%%%%%%%%%%%%%%%%%%%%%%%%%%%%%%%%%%%%%%%

%%%%%%%%%%%%%%%%%%%%%%%%%%%%%%%%%%%%%%%%%%%%%%%%%%%%%%%%%%%%%%%%%%%%%%%%%%%%%%
%%%%%%%%%%%%%%%%%%%%%%%%%%%%%%%%%%%%%%%%%%%%%%%%%%%%%%%%%%%%%%%%%%%%%%%%%%%%%%
\section{Introduction}                                  \label{S:Introduction}
%%%%%%%%%%%%%%%%%%%%%%%%%%%%%%%%%%%%%%%%%%%%%%%%%%%%%%%%%%%%%%%%%%%%%%%%%%%%%%
%%%%%%%%%%%%%%%%%%%%%%%%%%%%%%%%%%%%%%%%%%%%%%%%%%%%%%%%%%%%%%%%%%%%%%%%%%%%%%

Let $b_k:D\rightarrow\mathbb R$ $(k=0,1,\ldots,N;\, N\in\mathbb N)$ be
real-valued multivariable functions such that
\begin{equation}\label{E:BasisF}
b_k(\bm{t})\geq0,\quad \sum_{k=0}^{N}b_k(\bm{t})\equiv 1
\end{equation} 
for $\bm{t}\in C\subseteq D$.

Let us define the \textit{rational parametric object}
$\p{S}_N:C\rightarrow \mathbb E^d$ $(d\in\mathbb N)$ by
\begin{equation}\label{E:ParObject}
\p{S}_N(\bm{t}):=\frac{\displaystyle \sum_{k=0}^{N}\omega_k\p{W}_kb_k(\bm{t})}
                    {\displaystyle \sum_{k=0}^{N}\omega_k b_k(\bm{t})}
                                                        % \quad (\bm{t}\in C)
\end{equation}
with the \textit{weights} $\omega_k>0$, and \textit{control points}
$\p{W}_k\in\mathbb E^d$ $(0\leq k\leq N)$. If
$\omega_0=\omega_1=\ldots=\omega_N$, then 
$$
\p{S}_N(\bm{t})=\sum_{k=0}^{N}\p{W}_kb_k(\bm{t}).
$$

%It is not difficult to 
In the sequel, we prove that for a given $\bm{t}\in C$, the point
$\p{S}_N(\bm{t})\in\mathbb E^d$ can be computed by Algorithm~\ref{A:GenAlg}. 

\begin{algorithm}[ht!]
\caption{Computation of $\p{S}_N(\bm{t})$}\label{A:GenAlg}
\begin{algorithmic}[1]
\Procedure {GenAlg}{$N, \bm{t}, \omega, \p{W}$}

%\Statex

\State $h_0 \gets 1$
\State $\p{Q}_0 \gets \p{W}_0$

%\Statex

\For{$k \gets 1,N$}
       %\Statex
       \State $h_k \gets 
           \left(1+\frac{\displaystyle \omega_{k-1}b_{k-1}(\bm{t})}
                        {\displaystyle h_{k-1}\omega_kb_k(\bm{t})}\right)^{-1}$
       \Statex                                                      
       \State $\p{Q}_k \gets (1-h_k)\p{Q}_{k-1}+h_k\p{W}_k$
       %\Statex
\EndFor

%\Statex

\State \Return $\p{Q}_N$

%\Statex

\EndProcedure
\end{algorithmic}
\end{algorithm}

\begin{remark}\label{R:DivZero}
Let us fix $\bm{t}\in C$. Suppose that there exists $1\leq k\leq N$ such that
$b_k(\bm{t})=0$. Then one has the division by $0$ in the line 5 of 
Algorithm~\ref{A:GenAlg}. Such \textit{special cases} should be considered
separately. Observe that it is always possible because at least for one 
$0\leq j\leq N$ we have $b_j(\bm{t})>0$ (cf.~\eqref{E:BasisF}).
\end{remark}

\begin{theorem}\label{T:GenThm}
The quantities $h_k$ and $\p{Q}_k$ $(0\leq k\leq N)$ computed by 
Algorithm~\ref{A:GenAlg} have the following properties: 

\begin{enumerate}
\itemsep1ex

\item $h_k\in[0,1]$,

\item $\p{Q}_k\in\mathbb E^d$, 

\item %\begin{minipage}[t]{7cm}
      $\p{Q}_k\in
      C_k\equiv\mbox{conv}\{\p{W}_0,\p{W}_1,\ldots,\p{W}_k\}$
      (i.e., $\mbox{conv}\{\p{Q}_0,\p{Q}_1,\ldots,\p{Q}_k\}\subseteq C_k$).
      %\end{minipage}

\end{enumerate}
Moreover, $\p{S}_N(\bm{t})=\p{Q}_N$.
\end{theorem}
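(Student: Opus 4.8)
The plan is to uncover the quantity that Algorithm~\ref{A:GenAlg} implicitly tracks and to prove a single combined invariant by induction on $k$. Fixing $\bm{t}\in C$ outside the special cases of Remark~\ref{R:DivZero} (so that $b_k(\bm{t})>0$ for $0\le k\le N$), I introduce the partial denominators $\sigma_k:=\sum_{j=0}^{k}\omega_j b_j(\bm{t})$ and claim that the algorithm maintains
$$
h_k=\frac{\omega_k b_k(\bm{t})}{\sigma_k},\qquad
\p{Q}_k=\frac{1}{\sigma_k}\sum_{j=0}^{k}\omega_j\p{W}_j b_j(\bm{t}).
$$
Granting this claim, the final assertion is immediate: at $k=N$ the second identity is precisely $\p{S}_N(\bm{t})$ from~\eqref{E:ParObject}, so $\p{Q}_N=\p{S}_N(\bm{t})$.

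I would prove the two identities simultaneously by induction. The base case $k=0$ is a direct check: $h_0=1=\omega_0 b_0(\bm{t})/\sigma_0$ and $\p{Q}_0=\p{W}_0=\omega_0\p{W}_0 b_0(\bm{t})/\sigma_0$. For the inductive step the main obstacle — and the real content of the theorem — is verifying the closed form for $h_k$, because the recurrence in line~5 hides the sum $\sigma_{k-1}$ inside the factor $h_{k-1}$. The key algebraic fact is that the inductive hypothesis $h_{k-1}=\omega_{k-1}b_{k-1}(\bm{t})/\sigma_{k-1}$ rearranges to $\omega_{k-1}b_{k-1}(\bm{t})/h_{k-1}=\sigma_{k-1}$. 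Substituting this into line~5 gives
$$
h_k=\left(1+\frac{\sigma_{k-1}}{\omega_k b_k(\bm{t})}\right)^{-1}
   =\frac{\omega_k b_k(\bm{t})}{\sigma_{k-1}+\omega_k b_k(\bm{t})}
   =\frac{\omega_k b_k(\bm{t})}{\sigma_k},
$$
where the last equality uses $\sigma_k=\sigma_{k-1}+\omega_k b_k(\bm{t})$. This is exactly where the $O(1)$-memory design pays off: no running sum need be stored, since $\sigma_{k-1}$ is recovered from $h_{k-1}$.

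With the formula for $h_k$ in hand, the identity for $\p{Q}_k$ follows by expanding line~6. Since $1-h_k=\sigma_{k-1}/\sigma_k$ and $h_k=\omega_k b_k(\bm{t})/\sigma_k$, the update reads
$$
\p{Q}_k=\frac{\sigma_{k-1}}{\sigma_k}\,\p{Q}_{k-1}+\frac{\omega_k b_k(\bm{t})}{\sigma_k}\,\p{W}_k,
$$
and inserting the inductive form of $\p{Q}_{k-1}$ combines the two terms into $\frac{1}{\sigma_k}\sum_{j=0}^{k}\omega_j\p{W}_j b_j(\bm{t})$, completing the induction.

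Finally, properties (1)--(3) are read off from the established formulas. Property (1) holds because $h_k=\omega_k b_k(\bm{t})/\sigma_k$ is a ratio of a nonnegative numerator to the strictly positive sum $\sigma_k$ that dominates it, so $h_k\in[0,1]$. Property (2) is immediate, since $\p{Q}_k$ is an affine combination of points of $\mathbb E^d$. For property (3) I would argue by a short induction: $\p{Q}_0=\p{W}_0\in C_0$, and since $h_k\in[0,1]$ the combination $\p{Q}_k=(1-h_k)\p{Q}_{k-1}+h_k\p{W}_k$ is convex; as $\p{Q}_{k-1}\in C_{k-1}\subseteq C_k$ and $\p{W}_k\in C_k$, convexity of $C_k$ gives $\p{Q}_k\in C_k$.
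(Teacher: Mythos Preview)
Your proof is correct and follows essentially the same approach as the paper: you introduce the closed-form expressions $h_k=\omega_k b_k(\bm{t})/\sigma_k$ and $\p{Q}_k=\sigma_k^{-1}\sum_{j\le k}\omega_j\p{W}_j b_j(\bm{t})$ and verify they satisfy the algorithm's recurrences, which is precisely what the paper does (just without naming $\sigma_k$). The only cosmetic difference is that the paper reads off property~(c) directly from the closed form for $\p{Q}_k$ as a convex combination of the $\p{W}_j$, whereas you obtain it by a second short induction through the update rule---both are fine.
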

\begin{proof}
Let us define
$$
h_k:=\frac{\displaystyle \omega_kb_k(\bm{t})}
          {\displaystyle \sum_{j=0}^{k}\omega_j b_j(\bm{t})},\quad                                                       
\p{Q}_k:=\frac{\displaystyle \sum_{j=0}^{k}\omega_j\p{W}_jb_j(\bm{t})}
              {\displaystyle \sum_{j=0}^{k}\omega_jb_j(\bm{t})}
$$
$(k=0,1,\ldots,N)$. 

It is clear that $h_k\in[0,1]$, $\p{Q}_k\in{\mathbb E^d}$ for $0\leq k\leq N$,
$h_0=1$, $\p{W}_0=\p{Q}_0$, and $\p{S}_N(\bm{t})=\p{Q}_N$. Certainly,
$$
\p{Q}_k\in\mbox{conv}\{\p{W}_0,\p{W}_1,\ldots,\p{W}_k\}\quad (0\leq k\leq N).
$$
To end the proof, it is enough to check that:
$$
\left\{
\begin{array}{l}
(1-h_k)\p{Q}_{k-1}+h_k\p{W}_k=\p{Q}_k,\\[1ex]
\omega_kb_k(\bm{t})h_k^{-1}=
         \omega_{k-1}b_{k-1}(\bm{t})h_{k-1}^{-1}+\omega_kb_k(\bm{t})
\end{array}
\right.
$$         
for $1\leq k\leq N$ (cf.~lines 5, 6 in Algorithm~\ref{A:GenAlg}).
\end{proof}

Let us notice that Algorithm~\ref{A:GenAlg} has a geometric interpretation, 
uses only convex combinations of control points of $\p{S}_N$ and has linear
complexity with respect to $N$ --- under the assumption that all quotients of
two consecutive basis functions can be computed in the total time~$O(N)$. 

\begin{remark}
It may be worth mentioning that
\begin{equation}\label{E:Rel_1-h_k}
1-h_k=\frac{h_k}{h_{k-1}}\frac{\omega_{k-1}b_{k-1}(\bm{t})}
                              {\omega_kb_k(\bm{t})}                              
\end{equation}
for $1\leq k\leq N$. Using this simple relation, one can propose 
a subtraction-free version of Algorithm~\ref{A:GenAlg}. Such formulation can be
important for numerical reasons (cf.~the problem of \textit{cancellation of
digits}; see, e.g., \cite[\S2.3.4]{DB2008}). 
\end{remark}

We use relation~\eqref{E:Rel_1-h_k} in the proof of the following theorem
which shows an important property of Algorithm~\ref{A:GenAlg}.  

\begin{theorem}\label{T:GenConvexHullThm}
Let us fix $\bm{t}, \bm{u}\in C$. Assume that the numbers $h_k$ 
$(1\leq k\leq N)$ computed by Algorithm~\ref{A:GenAlg} are non-zero.
Suppose that
$$
\frac{b_k(\bm{t})}{b_{k+1}(\bm{t})}\leq \frac{b_k(\bm{u})}{b_{k+1}(\bm{u})}
                                                      \quad (0\leq k\leq N-1).
$$
Then the point $\p{S}_N(\bm{u})\in{\mathbb E^d}$ is
in the convex hull of the points $\p{Q}_0,\p{Q}_1,\ldots,\p{Q}_{N}$
computed by Algorithm~\ref{A:GenAlg}.
\end{theorem}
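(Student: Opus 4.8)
The plan is to exhibit explicit nonnegative weights $\lambda_0,\ldots,\lambda_N$ that sum to $1$ and satisfy $\p{S}_N(\bm u)=\sum_{k=0}^N\lambda_k\p{Q}_k$, where the $\p{Q}_k$ are the iterates produced by Algorithm~\ref{A:GenAlg} at the parameter $\bm t$. Throughout I work in the generic situation $b_k(\bm t)>0$ for every $k$, so that all the ratios and the inversion below are defined; the degenerate configurations in which some $b_k(\bm t)=0$ are exactly the special cases of Remark~\ref{R:DivZero} and can be treated separately (the factored coefficients obtained below stay nonnegative there as well, the only difficulty being the formally undefined ratios).

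First I would invert the update rule. Since $h_0=1$ and every $h_k\neq0$ for $k\ge1$, the line $\p{Q}_k=(1-h_k)\p{Q}_{k-1}+h_k\p{W}_k$ solves for the control points as $\p{W}_k=h_k^{-1}\bigl(\p{Q}_k-(1-h_k)\p{Q}_{k-1}\bigr)$, with $\p{W}_0=\p{Q}_0$. Substituting this into the barycentric form
\[
\p{S}_N(\bm u)=\sum_{k=0}^N\frac{\omega_kb_k(\bm u)}{\sum_{j=0}^N\omega_jb_j(\bm u)}\,\p{W}_k
\]
and collecting the telescoping contributions of each $\p{Q}_k$ yields $\p{S}_N(\bm u)=\sum_{k=0}^N\lambda_k\p{Q}_k$ with
\[
\lambda_k=\frac{1}{\sum_{j}\omega_jb_j(\bm u)}\left(\frac{\omega_kb_k(\bm u)}{h_k}-\frac{\omega_{k+1}b_{k+1}(\bm u)\,(1-h_{k+1})}{h_{k+1}}\right)
\]
for $0\le k\le N-1$, and $\lambda_N=\omega_Nb_N(\bm u)/(h_N\sum_j\omega_jb_j(\bm u))$.

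The key step, and the place where the hypothesis finally bites, is to simplify $\lambda_k$ by means of relation~\eqref{E:Rel_1-h_k}. Replacing $(1-h_{k+1})/h_{k+1}$ by $h_k^{-1}\,\omega_kb_k(\bm t)/(\omega_{k+1}b_{k+1}(\bm t))$ cancels the factor $\omega_{k+1}b_{k+1}$ and collapses the bracket to
\[
\lambda_k=\frac{\omega_kb_k(\bm t)}{h_k\sum_j\omega_jb_j(\bm u)}\left(\frac{b_k(\bm u)}{b_k(\bm t)}-\frac{b_{k+1}(\bm u)}{b_{k+1}(\bm t)}\right).
\]
Here the prefactor equals $\bigl(\sum_{j=0}^k\omega_jb_j(\bm t)\bigr)/\sum_j\omega_jb_j(\bm u)>0$ (using $h_k=\omega_kb_k(\bm t)/\sum_{j=0}^k\omega_jb_j(\bm t)$ from the proof of Theorem~\ref{T:GenThm}), and the bracketed difference is nonnegative precisely because the assumption $b_k(\bm t)/b_{k+1}(\bm t)\le b_k(\bm u)/b_{k+1}(\bm u)$ is equivalent to $b_k(\bm u)/b_k(\bm t)\ge b_{k+1}(\bm u)/b_{k+1}(\bm t)$. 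Hence $\lambda_k\ge0$ for all $k$.

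It remains to verify $\sum_{k=0}^N\lambda_k=1$. One can telescope the simplified coefficients directly against the partial sums $\sum_{j\le k}\omega_jb_j(\bm t)$, after which everything reduces to $\sum_k\omega_kb_k(\bm u)/\sum_j\omega_jb_j(\bm u)=1$. A cleaner argument: the identity $\p{S}_N(\bm u)=\sum_k\lambda_k\p{Q}_k$ was obtained as an algebraic identity valid for \emph{arbitrary} control points, while $\p{S}_N(\bm u)$ and each $\p{Q}_k$ are genuine barycentric averages of the $\p{W}_j$; specializing to $\p{W}_0=\cdots=\p{W}_N=\p{P}$ forces $\p{P}=\bigl(\sum_k\lambda_k\bigr)\p{P}$, so $\sum_k\lambda_k=1$. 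Combined with $\lambda_k\ge0$, this presents $\p{S}_N(\bm u)$ as a convex combination of $\p{Q}_0,\ldots,\p{Q}_N$, which is the assertion. The only real obstacle is the bookkeeping of the telescoping inversion and the correct use of~\eqref{E:Rel_1-h_k}; once $\lambda_k$ is in the factored form above, nonnegativity is immediate from the ordering hypothesis.
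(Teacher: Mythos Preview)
Your argument is correct and is essentially the same as the paper's: both invert the recursion to write $\p{W}_k$ in terms of $\p{Q}_k,\p{Q}_{k-1}$, substitute into $\p{S}_N(\bm u)$, and use relation~\eqref{E:Rel_1-h_k} to reach the factored coefficient $\dfrac{\omega_k}{h_k}b_k(\bm u)\Bigl(1-\dfrac{b_k(\bm t)b_{k+1}(\bm u)}{b_{k+1}(\bm t)b_k(\bm u)}\Bigr)$, which is your $\lambda_k$ rewritten. The paper leaves ``the coefficients sum to $1$'' implicit; your specialization $\p{W}_j\equiv\p{P}$ makes that step explicit and is a nice addition.
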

\begin{proof}
Let the numbers $h_k$ and the points $\p{Q}_k$ $(0\leq k\leq N)$ be computed by
Algorithm~~\ref{A:GenAlg} for a fixed $\bm{t}\in C$. 

Using relation~\eqref{E:Rel_1-h_k} and the assumption that $h_k\neq 0$ 
$(1\leq k\leq N)$, observe that
$$
\p{W}_k=h_k^{-1}\p{Q}_k-h_{k-1}^{-1}\frac{\omega_{k-1}b_{k-1}(\bm{t})}
                                       {\omega_{k}b_{k}(\bm{t})}\p{Q}_{k-1}
$$
for $1\leq k\leq N$. Thus, after simple algebra, we obtain
\begin{eqnarray*}
\lefteqn{\p{S}_N(\bm{u})=D_N(\bm{u})^{-1}
     \Bigg(\frac{\omega_N}{h_N}b_N(\bm{u})\cdot\p{Q}_{N}}\\
&&\hspace{-1.5ex}+     
     \sum_{k=0}^{N-1}\frac{\omega_k}{h_k}b_k(\bm{u})
              \left(1-\frac{b_k(\bm{t})b_{k+1}(\bm{u})}
                           {b_{k+1}(\bm{t})b_{k}(\bm{u})}\right)\cdot\p{Q}_k                           
     \Bigg),
\end{eqnarray*}
where $D_N(\bm{u}):=\sum_{k=0}^{N}\omega_kb_k(\bm{u})>0$. 

Now, from our assumptions, it easily follows that the point $\p{S}_N(\bm{u})$
belongs to the set $\mbox{conv}\{\p{Q}_0,\p{Q}_1,\ldots,\p{Q}_N\}$, because
the $h_k$ $(0\leq k\leq N)$ are positive (cf.~Theorem~\ref{T:GenThm}).
\end{proof}

The main aim of this article is to use the presented results to propose a new
method for evaluating a polynomial or rational B\'{e}zier curve, which has
a geometric interpretation, linear complexity with respect to the number of 
control points, good numerical properties and computes only convex combinations
of points from ${\mathbb E^d}$. See Section~\ref{S:NewEvalBC}.

A similar approach can also be used for the evaluation of polynomial and 
rational tensorproduct, as well as triangular, B\'{e}zier surfaces. Some remarks 
on this issue are given, without \textit{technical} details and 
\textit{rigorous} algorithms, in Section~\ref{S:NewEvalBS}.

%%%%%%%%%%%%%%%%%%%%%%%%%%%%%%%%%%%%%%%%%%%%%%%%%%%%%%%%%%%%%%%%%%%%%%%%%%%%%%
%%%%%%%%%%%%%%%%%%%%%%%%%%%%%%%%%%%%%%%%%%%%%%%%%%%%%%%%%%%%%%%%%%%%%%%%%%%%%%
\section{New algorithm for evaluating B\'{e}zier curves}   \label{S:NewEvalBC}
%%%%%%%%%%%%%%%%%%%%%%%%%%%%%%%%%%%%%%%%%%%%%%%%%%%%%%%%%%%%%%%%%%%%%%%%%%%%%%
%%%%%%%%%%%%%%%%%%%%%%%%%%%%%%%%%%%%%%%%%%%%%%%%%%%%%%%%%%%%%%%%%%%%%%%%%%%%%%

Let there be given points $\p{W}_0, \p{W}_1, \ldots,\p{W}_n\in\mathbb{E}^d$
$(n,d\in\mathbb N)$. Let us consider the (polynomial) \textit{B\'{e}zier curve}
of the form
\begin{equation}\label{E:BezierCurve}
\p{P}_n(t):=\sum_{k=0}^{n}\p{W}_kB^n_k(t)\quad (t\in[0,1]),
\end{equation}
where $B^n_k$ is the $k$th \textit{Bernstein polynomial} of degree $n$,
\begin{equation}\label{E:BerPoly}
B^n_k(t):=\binom{n}{k}t^k(1-t)^{n-k}\quad (0\leq k\leq n).
\end{equation}

For a given $t\in[0,1]$, the point $\p{P}_n(t)\in\mathbb E^d$ can be computed
by famous the de Casteljau algorithm (see, e.g., \cite[\S4.2]{Farin2002} and
Appendix), which has good numerical properties, a simple geometric
interpretation and computes only convex combinations of control points
$\p{W}_k$ $(0\leq k\leq n)$. However, the computational complexity of this 
method is $O(dn^2)$, which makes it quite expensive.

Probably, the fastest way to compute the coordinates of the point 
$\p{P}_n(t)\in\mathbb E^d$ is to use the algorithm proposed in~\cite{SV1986} 
for evaluating a polynomial $p$ given in the form
$$
p(t):=\sum_{k=0}^{n}p_kt^k(1-t)^{n-k}\quad (p_k\in\mathbb R)
$$
$d$ times (once for each dimension). This method has $O(dn)$ computational
complexity and $O(1)$ memory complexity. It uses the concept of Horner's rule 
(see, e.g., \cite[Eq.~(1.2.2)]{DB2008}). 
%%% thus the algorithm has no geometric interpretation.

Note that some other methods for evaluating a~B\'{e}zier curve are
also known. See, e.g., \cite{Bezerra2013} or \cite{Peters1994}, where the case 
of B\'{e}zier surfaces was also studied (cf.~Section~\ref{S:NewEvalBS}), and
papers cited therein. 

Let $\p{R}_n$ be a \textit{rational B\'{e}zier curve} in $\mathbb E^d$,
\begin{equation}\label{E:RatBezierCurve}
\p{R}_n(t):=\frac{\displaystyle \sum_{k=0}^{n}\omega_k\p{W}_kB^n_k(t)}
                 {\displaystyle \sum_{k=0}^{n}\omega_kB^n_k(t)}     
                                                      \quad (t\in[0,1])
\end{equation}
with the weights $\omega_0, \omega_1,\ldots, \omega_n\in \mathbb R_{+}$. 
To compute the point $\p{R}_n(t)\in\mathbb E^d$ for a given $t\in[0,1]$, one 
can use the rational de Casteljau algorithm (see, e.g., 
\cite[\S13.2]{Farin2002} and Appendix), which also has $O(dn^2)$ computational
complexity, good numerical properties, a geometric interpretation and computes
only convex combinations of the control points $\p{W}_k$ $(0\leq k\leq n)$, or
use the idea from~\cite{SV1986}, which leads to linear-time method at the cost 
of losing some geometric properties.
%%% without a geometric interpretation.

The main purpose of this section is to propose a new efficient method
for computing a point on a B\'{e}zier curve and on a rational B\'{e}zier 
curve. The given algorithm has:
\begin{enumerate}
\itemsep0.15ex

\item a geometric interpretation,

\item quite good numerical properties, i.e., they are safe for floating-point
      computations,

\item linear computational complexity, i.e., $O(dn)$, and $O(1)$ memory 
      complexity,

\end{enumerate}
and computes only
\begin{enumerate}
\itemsep0.15ex
\setcounter{enumi}{3}

\item convex combinations of control points.

\end{enumerate}
As we show later, the new method combines the advantages of de Casteljau
algorithms and the low complexity of methods based on~\cite{SV1986}.

%%%%%%%%%%%%%%%%%%%%%%%%%%%%%%%%%%%%%%%%%%%%%%%%%%%%%%%%%%%%%%%%%%%%%%%%%%%%%%
\subsection{New method}                                 \label{SS:NewMethodBC}
%%%%%%%%%%%%%%%%%%%%%%%%%%%%%%%%%%%%%%%%%%%%%%%%%%%%%%%%%%%%%%%%%%%%%%%%%%%%%%

Let $\p{R}_n$ be the rational B\'{e}zier curve~\eqref{E:RatBezierCurve}.
Let us fix: a parameter $t\in[0,1]$, a natural number $n$, weights 
$\omega_0,\omega_1,\ldots,\omega_n>0$ and control points 
$\p{W}_0, \p{W}_1, \ldots,\p{W}_n\in\mathbb{E}^d$ $(d\in\mathbb N)$. 

Let the quantities $h_k$ and $\p{Q}_k$ $(0\leq k\leq n)$ be computed
recursively by formulas
%\begin{eqnarray}
%&&\!\!\! \nonumber
%h_0:=1,\quad \p{Q}_0:=\p{W}_0,\\[0.6ex]
%&&\!\!\! \nonumber
%h_k:=\frac{\displaystyle \omega_kh_{k-1}t(n-k+1)}
%          {\displaystyle \omega_{k-1}k(1-t)+\omega_kh_{k-1}t(n-k+1)},\\[0.8ex]
%&&\!\!\! \label{E:Def_h_k_Q_k}
%\p{Q}_k:=(1-h_k)\p{Q}_{k-1}+h_k\p{W}_k
%\end{eqnarray}
\begin{equation}\label{E:Def_h_k_Q_k}
\left\{\hspace{-0.81ex}
\begin{array}{l}
h_0:=1,\quad \p{Q}_0:=\p{W}_0,\\[1ex]
h_k:=\frac{\displaystyle \omega_kh_{k-1}t(n-k+1)}
          {\displaystyle \omega_{k-1}k(1-t)+\omega_kh_{k-1}t(n-k+1)},\\[2.5ex]
\p{Q}_k:=(1-h_k)\p{Q}_{k-1}+h_k\p{W}_k  
\end{array}
\right.
\end{equation}
for $k=1,2,\ldots,n$.
 
% In Section~\ref{S:Proof}, we prove the following theorem.

\begin{theorem}\label{T:Thm1}
For all $k=0,1,\ldots,n$, the quantities $h_k$ and $\p{Q}_k$ satisfy:
\begin{enumerate}
\itemsep1ex

\item $h_k\in[0,1]$,

\item $\p{Q}_k\in\mathbb E^d$, 

\item %\begin{minipage}[t]{7cm}
       $\p{Q}_k\in
       C_k\equiv\mbox{conv}\{\p{W}_0,\p{W}_1,\ldots,\p{W}_k\}$
       (i.e., $\mbox{conv}\{\p{Q}_0,\p{Q}_1,\ldots,\p{Q}_k\}\subseteq C_k$).
      %\end{minipage}

\end{enumerate}
Moreover, we have $\p{R}_n(t)=\p{Q}_n$. 
\end{theorem}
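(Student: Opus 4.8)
The plan is to obtain Theorem~\ref{T:Thm1} as a direct specialization of Theorem~\ref{T:GenThm} to the Bernstein basis, rather than to repeat the convexity argument from scratch. First I would verify that the choice $b_k:=B^n_k$ meets the hypotheses~\eqref{E:BasisF}: on $C=[0,1]$ each Bernstein polynomial~\eqref{E:BerPoly} is non-negative, and $\sum_{k=0}^n B^n_k(t)\equiv 1$ by the binomial theorem. With this identification $\p{R}_n$ coincides with the object $\p{S}_N$ of~\eqref{E:ParObject} (for $N=n$), so the entire machinery of Algorithm~\ref{A:GenAlg} and Theorem~\ref{T:GenThm} is available --- provided I show that the closed-form recursion~\eqref{E:Def_h_k_Q_k} reproduces exactly the $h_k$ generated by Algorithm~\ref{A:GenAlg} for this basis.

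The one computation that must be done is the ratio of consecutive Bernstein polynomials. Using~\eqref{E:BerPoly} together with $\binom{n}{k-1}/\binom{n}{k}=k/(n-k+1)$, I get
$$
\frac{B^n_{k-1}(t)}{B^n_k(t)}=\frac{k(1-t)}{(n-k+1)\,t}\qquad(1\le k\le n).
$$
Inserting this into the $h_k$-update $h_k=\bigl(1+\omega_{k-1}B^n_{k-1}(t)\,/\,(h_{k-1}\omega_kB^n_k(t))\bigr)^{-1}$ of Algorithm~\ref{A:GenAlg} and clearing fractions gives precisely the definition of $h_k$ in~\eqref{E:Def_h_k_Q_k}; the $\p{Q}_k$-update is literally the same in both places. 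Hence the two recursions agree on $(0,1)$.

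The subtle point --- and the only one I would treat with care --- is the division by $0$ flagged in Remark~\ref{R:DivZero}, which for the Bernstein basis arises exactly at the endpoints $t\in\{0,1\}$ (there some $B^n_k(t)$ vanish, whereas $B^n_k(t)>0$ throughout $(0,1)$). The advantage of the rewritten form~\eqref{E:Def_h_k_Q_k} is that its denominator $\omega_{k-1}k(1-t)+\omega_kh_{k-1}t(n-k+1)$ stays strictly positive on the whole of $[0,1]$: at an endpoint one of the two summands survives, and a short induction on $k$ starting from $h_0=1$ guarantees $h_{k-1}>0$, so the $t=1$ summand is positive. Thus no separate special case is needed: a direct check shows that $t=0$ forces every $h_k=0$, hence $\p{Q}_n=\p{W}_0=\p{R}_n(0)$, and that $t=1$ forces every $h_k=1$, hence $\p{Q}_n=\p{W}_n=\p{R}_n(1)$.

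Once the two recursions are known to coincide on all of $[0,1]$, the three listed properties and the identity $\p{R}_n(t)=\p{Q}_n$ follow immediately from Theorem~\ref{T:GenThm}. I do not anticipate a genuine obstacle: the substance is the routine algebra of the Bernstein ratio, and the only thing requiring attention is confirming that the new closed form silently absorbs the endpoint cases that were problematic for the generic Algorithm~\ref{A:GenAlg}.
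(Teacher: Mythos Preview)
Your proposal is correct and follows the same route as the paper: specialize Theorem~\ref{T:GenThm} with $N:=n$, $b_k:=B^n_k$, verify that the recursion~\eqref{E:Def_h_k_Q_k} is the Bernstein instance of Algorithm~\ref{A:GenAlg}, and check separately that the rewritten form handles the endpoints $t\in\{0,1\}$ without division by zero, yielding $\p{W}_0$ and $\p{W}_n$ respectively. The paper's own proof is terser but makes exactly these two moves.
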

\begin{proof}
% Let us fix $t\in(0,1)$ (we exclude $t=0$ and $t=1$ as zeros of Bernstein
% polynomials). 
The proof goes in a similar way to that of 
Theorem~\ref{T:GenThm}, where $N:=n$, $b_k(\bm{t}):=B^n_k(t)$.

Note that this method is robust --- \textit{special cases} $t=0$ and $t=1$ 
do not cause division by zero (cf.~Remark~\ref{R:DivZero}) and yield $\p{W}_0$
and $\p{W}_n$, respectively.
\end{proof}

In each step of the new method, the point $\p{Q}_k$, which is a convex
combination of points $\p{Q}_{k-1}$ and $\p{W}_k$, is computed. The last point
$\p{Q}_n$ is equal to the point $\p{R}_n(t)$. Thus, we obtain the new
linear-time geometric algorithm for computing a point on a rational B\'{e}zier
curve which computes only convex combinations of control points. For efficient
implementations, see Section~\ref{SS:ImplementationCostBC}.	

Note that if all weights $\omega_k$ are equal then $\p{Q}_n=\p{P}_n(t)$
(cf.~\eqref{E:BezierCurve}) --- the new method can also be used to evaluate 
a polynomial B\'{e}zier curve.

Figure~\ref{F:Figure1} illustrates the new method in case of a~planar 
polynomial B\'{e}zier curve of degree $n=5$.

\begin{figure*}[ht!]
\centering
\vspace*{-8.2ex}%
\includegraphics[angle=0,scale=0.5]{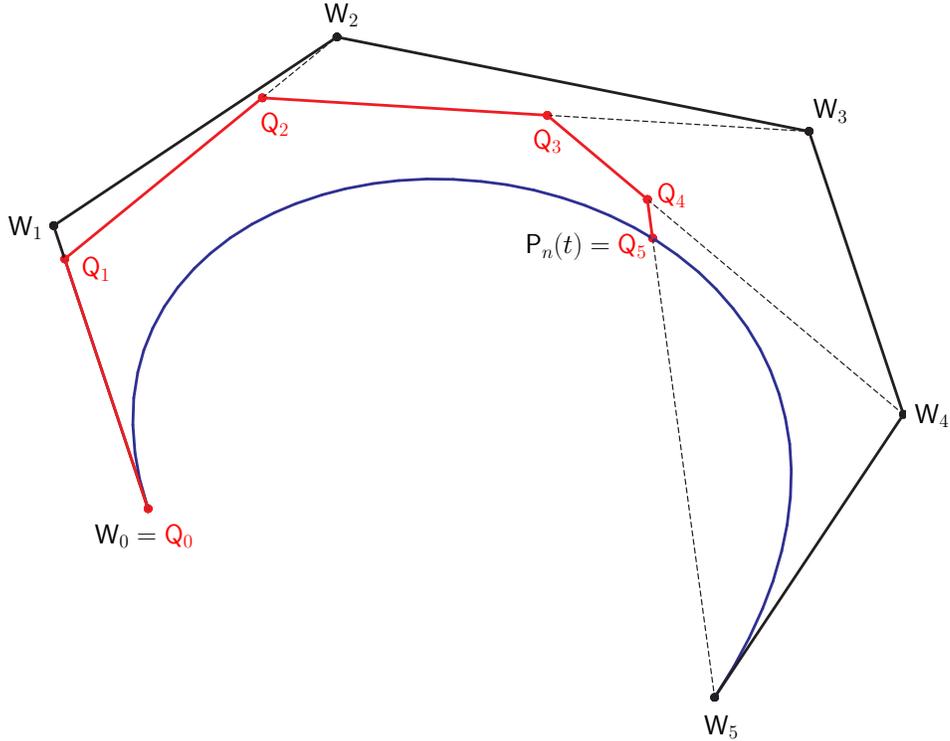}
\caption{Computation of a point on a planar polynomial B\'{e}zier curve of 
degree $n=5$ using the new method.}\label{F:Figure1}
\end{figure*}	

Using Theorem~\ref{T:GenConvexHullThm}, one can prove the following result
which tells even more about geometric properties of the new method. 

\begin{theorem}
Let the numbers $h_k$ and the points $\p{Q}_k$ $(0\leq k\leq n)$ be computed
by~\eqref{E:Def_h_k_Q_k} for a given $0\leq t\leq 1$. The point $\p{R}_n(u)$,
where $u\in[0,1]$, is in the convex hull of the points 
$\p{Q}_0, \p{Q}_1, \ldots, \p{Q}_n$ if and only if $u\leq t$. It means that
$$
\p{R}_n([0,u])\subset\mbox{conv}\{\p{Q}_0, \p{Q}_1, \ldots, \p{Q}_n\}\quad
(u\leq t).
$$
\end{theorem}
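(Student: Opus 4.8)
The plan is to specialize Theorem~\ref{T:GenConvexHullThm} to the Bernstein basis and then sharpen it into an equivalence. First I would compute the ratio of consecutive basis functions. From~\eqref{E:BerPoly},
$$
\frac{B^n_k(t)}{B^n_{k+1}(t)}=\frac{k+1}{n-k}\cdot\frac{1-t}{t}\qquad(0\le k\le n-1),
$$
so the $k$-dependent factor $\tfrac{k+1}{n-k}$ is common to $t$ and $u$ and cancels. Hence the hypothesis
$$
\frac{B^n_k(t)}{B^n_{k+1}(t)}\le\frac{B^n_k(u)}{B^n_{k+1}(u)}\qquad(0\le k\le n-1)
$$
of Theorem~\ref{T:GenConvexHullThm} holds for \emph{all} $k$ simultaneously if and only if $\tfrac{1-t}{t}\le\tfrac{1-u}{u}$, i.e.\ if and only if $u\le t$. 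I would carry out the main argument for $0<t<1$ and $0<u<1$, for which all $h_k$ produced by~\eqref{E:Def_h_k_Q_k} are strictly positive (so that Theorem~\ref{T:GenConvexHullThm} applies), and dispose of the endpoints $t,u\in\{0,1\}$ directly from the degenerate values recorded in the proof of Theorem~\ref{T:Thm1} ($t=0$ gives $\p{Q}_k\equiv\p{W}_0$ and a one-point hull; $t=1$ gives $\p{Q}_k=\p{W}_k$ and the full hull).

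For the \textbf{``if''} direction, assume $u\le t$. By the computation above the hypothesis of Theorem~\ref{T:GenConvexHullThm} is satisfied, so that theorem yields $\p{R}_n(u)\in\mbox{conv}\{\p{Q}_0,\ldots,\p{Q}_n\}$. Since every $u'\in[0,u]$ also satisfies $u'\le t$, applying this to each such $u'$ gives the displayed inclusion $\p{R}_n([0,u])\subset\mbox{conv}\{\p{Q}_0,\ldots,\p{Q}_n\}$.

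For the \textbf{``only if''} direction I would reuse the explicit expansion of $\p{S}_N(\bm u)$ obtained inside the proof of Theorem~\ref{T:GenConvexHullThm}, now with $b_k=B^n_k$. The decisive simplification is that, by the ratio formula above, the bracketed coefficient there becomes independent of $k$:
$$
1-\frac{B^n_k(t)\,B^n_{k+1}(u)}{B^n_{k+1}(t)\,B^n_{k}(u)}=1-\frac{u(1-t)}{t(1-u)}=\frac{t-u}{t(1-u)}=:\lambda .
$$
Collecting the positive weights $d_k:=D_n(u)^{-1}\tfrac{\omega_k}{h_k}B^n_k(u)>0$, where $D_n(u)=\sum_{k=0}^{n}\omega_kB^n_k(u)>0$, the whole expansion collapses to a \emph{two-point} affine combination
$$
\p{R}_n(u)=c_n\,\p{Q}_n+(1-c_n)\,\overline{\p{Q}},\qquad
\overline{\p{Q}}:=\frac{1}{S}\sum_{k=0}^{n-1}d_k\,\p{Q}_k\in\mbox{conv}\{\p{Q}_0,\ldots,\p{Q}_{n-1}\},
$$
with $S:=\sum_{k<n}d_k>0$, $c_n:=D_n(u)^{-1}\tfrac{\omega_n}{h_n}B^n_n(u)>0$, and $1-c_n=\lambda S$ (the coefficients sum to $1$ because $\p{R}_n(u)$ is an affine combination of the $\p{Q}_k$). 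Thus $\p{R}_n(u)$ lies on the line through $\overline{\p{Q}}$ and $\p{Q}_n$ at parameter $c_n$: for $u<t$ one has $c_n\in(0,1)$ (a genuine convex combination), for $u=t$ one has $c_n=1$ and $\p{R}_n(t)=\p{Q}_n$, while for $u>t$ the sign of $\lambda$ flips, $c_n>1$, and the point lies strictly beyond $\p{Q}_n$ on the side away from $\overline{\p{Q}}$.

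The main obstacle is the final step of the converse: a single representation with a negative coefficient does not by itself place a point outside a convex hull. To close the gap I would use the two-point form above: writing $K:=\mbox{conv}\{\p{Q}_0,\ldots,\p{Q}_n\}$, the point $\overline{\p{Q}}\in K$ and $\p{R}_n(u)$ is the extrapolation of the segment $[\overline{\p{Q}},\p{Q}_n]$ past its endpoint $\p{Q}_n$. Under the mild non-degeneracy that $\p{Q}_n=\p{R}_n(t)$ is a vertex of $K$ (equivalently $\p{R}_n(t)\notin\mbox{conv}\{\p{Q}_0,\ldots,\p{Q}_{n-1}\}$), there is a supporting functional $\ell$ attaining $\max_K\ell$ uniquely at $\p{Q}_n$; then $\ell(\overline{\p{Q}})<\ell(\p{Q}_n)$, and $\ell(\p{R}_n(u))=\ell(\p{Q}_n)+(1-c_n)\bigl(\ell(\overline{\p{Q}})-\ell(\p{Q}_n)\bigr)>\ell(\p{Q}_n)$ because $c_n>1$ makes the product of two negative factors positive, so $\p{R}_n(u)\notin K$. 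I would state this genericity hypothesis explicitly, since it is precisely what fails in the degenerate configurations (e.g.\ coincident control points) where the ``only if'' cannot hold.
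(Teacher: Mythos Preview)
Your ``if'' direction is precisely what the paper intends: the theorem is stated immediately after the sentence ``Using Theorem~\ref{T:GenConvexHullThm}, one can prove the following result,'' and no further argument is given. Your computation of the Bernstein ratio $B^n_k(t)/B^n_{k+1}(t)=\tfrac{k+1}{n-k}\cdot\tfrac{1-t}{t}$ and the reduction of the hypothesis of Theorem~\ref{T:GenConvexHullThm} to the single inequality $u\le t$ is exactly the intended specialization, and your handling of the endpoints $t\in\{0,1\}$ is clean.

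For the ``only if'' direction the paper offers nothing beyond that same sentence, so there is no proof to compare against; you have gone further than the paper does. Your collapse of the expansion from the proof of Theorem~\ref{T:GenConvexHullThm} to the two--point affine form
\[
\p{R}_n(u)=c_n\,\p{Q}_n+(1-c_n)\,\overline{\p{Q}},\qquad 1-c_n=\frac{t-u}{t(1-u)}\sum_{k<n}d_k,
\]
exploiting that the bracketed factor is $k$--independent for Bernstein polynomials, is a genuine sharpening. Your diagnosis of the remaining obstacle is also correct, and in fact exposes a gap in the paper's \emph{statement}: the converse is false without the non--degeneracy hypothesis you add. The configuration $\p{W}_0=\cdots=\p{W}_n$ already gives $\p{Q}_0=\cdots=\p{Q}_n$ and a one--point hull containing $\p{R}_n(u)$ for every $u\in[0,1]$; more generally, whenever $\p{Q}_n=\p{R}_n(t)\in\mbox{conv}\{\p{Q}_0,\ldots,\p{Q}_{n-1}\}$ your supporting--functional argument cannot start and indeed the conclusion can fail. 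So the extra hypothesis you introduce is not a defect of your proof but a necessary correction to the theorem; the unqualified biconditional in the paper should be read as holding only for control--point configurations in which $\p{R}_n(t)$ is a genuine vertex of $\mbox{conv}\{\p{Q}_0,\ldots,\p{Q}_n\}$.
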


%At the end of this section, 
Let us notice that the proposed method can also be used for the
\textit{subdivision} of B\'{e}zier curve (cf., e.g., \cite[\S5.4]{Farin2002}).
For example, let us fix $u\in(0,1)$, it is well-known that the points
$$
\p{V}_k:=\sum_{j=0}^{k}B^k_j(u)\p{W}_j\quad (0\leq k\leq n)
$$
are the control points of the polynomial B\'{e}zier curve $\p{P}_n^L$ being the
\textit{left part} of the B\'{e}zier curve \eqref{E:BezierCurve} with 
$t\in[0,u]$. One can check that
$$
\p{V}_k=\sum_{j=0}^{k}h_j^{-1}\frac{n-k}{n-j}B^k_j(u)\p{Q}_j
                                                 \quad (0\leq k\leq n-1),
$$
$\p{V}_n=\p{Q}_n$, where the numbers $h_j$ and the points $\p{Q}_j$ 
$(0\leq j\leq n)$ are computed using \eqref{E:Def_h_k_Q_k} with $t:=u$,
$\omega_0=\omega_1=\ldots=\omega_n:=1$.

%%%%%%%%%%%%%%%%%%%%%%%%%%%%%%%%%%%%%%%%%%%%%%%%%%%%%%%%%%%%%%%%%%%%%%%%%%%%%%
\subsection{Implementation and cost}           \label{SS:ImplementationCostBC}
%%%%%%%%%%%%%%%%%%%%%%%%%%%%%%%%%%%%%%%%%%%%%%%%%%%%%%%%%%%%%%%%%%%%%%%%%%%%%%

Let us give efficient and numerically safe implementations of the new method
which have $O(dn)$ computational complexity and $O(1)$ memory complexity.

\begin{algorithm}[ht!]
\caption{First implementation}\label{A:Alg3}
\begin{algorithmic}[1]
\Procedure {NewRatBEval1}{$n, t, \omega, \p{W}$}
           %{NewRatB\'{e}zierEval}{$n$, $t$, $\omega$, $\p{W}$}

%\Statex

\State $h \gets 1$
\State $u \gets 1-t$
\State $n_1 \gets n+1$
\State $\p{Q} \gets \p{W}_0$

%\Statex

\For {$k \gets 1,n$}
       \State $h \gets h\cdot t\cdot(n_1-k)\cdot\omega_k$
       \State $h \gets h/(k\cdot u\cdot\omega_{k-1}+h)$
       \State $h_1 \gets 1-h$
       \State $\p{Q} \gets h_1\cdot\p{Q}+h\cdot\p{W}_k$
\EndFor

%\Statex

\State \Return $\p{Q}$

%\Statex

\EndProcedure
\end{algorithmic}
\end{algorithm}

The implementation provided in Algorithm \ref{A:Alg3} requires $(3d+8)n+1$
floating-point arithmetic operations (\textsf{flops}) to compute a point on 
a rational B\'{e}zier curve of degree $n$ in $\mathbb E^d$. 

\begin{algorithm}[ht!]
\caption{Second implementation}\label{A:Alg4}
\begin{algorithmic}[1]
\Procedure {NewRatBEval2}{$n, t, \omega, \p{W}$}
           %{NewRatB\'{e}zierEval}{$n$, $t$, $\omega$, $\p{W}$}

%\Statex

\State $h \gets 1$
\State $u \gets 1-t$
\State $n_1 \gets n+1$
\State $\p{Q} \gets \p{W}_0$

%\Statex

\If {$t\leq 0.5$}

\State $u \gets t/u$

\For {$k \gets 1,n$}
       \State $h \gets h\cdot u\cdot(n_1-k)\cdot\omega_k$
       \State $h \gets h/(k\cdot\omega_{k-1}+h)$
       \State $h_1 \gets 1-h$
       \State $\p{Q} \gets h_1\cdot\p{Q}+h\cdot\p{W}_k$
\EndFor

\Else

\State $u \gets u/t$

\For {$k \gets 1,n$}
       \State $h \gets h\cdot(n_1-k)\cdot\omega_k$
       \State $h \gets h/(k\cdot u\cdot\omega_{k-1}+h)$
       \State $h_1 \gets 1-h$
       \State $\p{Q} \gets h_1\cdot\p{Q}+h\cdot\p{W}_k$
\EndFor

\EndIf

%\Statex

\State \Return $\p{Q}$

%\Statex

\EndProcedure
\end{algorithmic}
\end{algorithm}

Algorithm~\ref{A:Alg4} decreases the number of \textsf{flops} to $(3d+7)n+2$.
However, for numerical reasons (cf.~lines 7 and 15 in Algorithm~\ref{A:Alg4}), 
it is necessary to use a~conditional statement. More precisely, one has to 
check whether $t\in[0,0.5]$ or $t\in(0.5,1]$, which can be easily done (it is
enough to check an exponent of a~floating-point number $t$).

Note that in the case of polynomial B\'{e}zier curves~\eqref{E:BezierCurve}, 
one only needs to set $\omega_k:=1$ $(0\leq k\leq n)$ in the given algorithms,
thus simplifying used formulas. Then the number of \textsf{flops} is equal to
$(3d+6)n+1$ in~Algorithm~\ref{A:Alg3} and $(3d+5)n+2$ 
in Algorithm~\ref{A:Alg4}.

\begin{table*}[ht!]
\begin{center}
\renewcommand{\arraystretch}{2.25}
\begin{tabular}{llcc}
&&
\begin{minipage}[t]{2.5cm}
new method\\
(cf.~Alg.~\ref{A:Alg4})   
\end{minipage} 
&
\begin{minipage}[t]{2.5cm}
de~Casteljau\\
(cf.~Appendix)\\ 
\end{minipage}\\ \hline
% \\
B\'{e}zier curve & 
in total & $(3d+5)n+2$ & $\displaystyle \frac{3dn(n+1)}{2}+1$ \\ %[1.75ex]
& add/sub & $(d+2)n+1$ & $\displaystyle \frac{dn(n+1)}{2}+1$ \\ %[1.75ex]
& mult & $2(d+1)n$ & $dn(n+1)$ \\ %[1.75ex]
& div & $n+1$ & $0$ \\ \hline %[2ex] \hline
%\\
\begin{minipage}[t]{2.5cm}
rational B\'{e}zier\\ curve \\ \vspace{-1ex}
\end{minipage} 
& in total & $(3d+7)n+2$ & 
                  $\displaystyle \frac{(3d+5)n(n+1)}{2}+1$\\
& add/sub & $(d+2)n+1$ & 
            $\displaystyle \frac{(d+2)n(n+1)}{2}+1$\\ %[1.75ex]
& mult & $2(d+2)n$ & $(d+1)n(n+1)$\\ %[1.75ex]
& div & $n+1$ & $\displaystyle \frac{n(n+1)}{2}$\\ \hline %[3ex] \hline         
\end{tabular}
\renewcommand{\arraystretch}{1}
\vspace{2ex}
\caption{%%%Total 
Numbers of \textsf{flops}.}\label{T:Table1}
\vspace{-3ex}
\end{center}
\end{table*}

The %%% total 
numbers of \textsf{flops} for the new algorithms, as well as for de Casteljau
algorithms (see Appendix), which also have a geometric interpretation and 
compute only convex combinations of control points, are given in
Table~\ref{T:Table1}.

\begin{example}\label{E:Example1}
Table~\ref{T:Table2} shows the comparison between the running times of de
Casteljau algorithm and Algorithm~\ref{A:Alg4} both for B\'{e}zier curves and
rational B\'{e}zier curves (in the case of B\'{e}zier curves,
Algorithm~\ref{A:Alg4} has been simplified), for $d\in\{2,3\}$. The results
have been obtained on a computer with 
\texttt{Intel Core i5-2540M CPU} at \texttt{2.60GHz} processor and \texttt{4GB}
\texttt{RAM}, using \texttt{GNU C Compiler 7.4.0} (single precision).

More precisely, we made the following numerical experiments. For a fixed $n$,
$10000$ curves of degree $n$ are generated. Their control points
$\p{W}_k\in[-1,1]^d$ and---in the rational case---weights $\omega_k\in[0.01,1]$
$(0\leq k\leq n)$ have been generated using the \texttt{rand()} C function. Each
curve is then evaluated at 501 points $t_i:=i/500$ $(0 \leq i \leq 500)$. Each
algorithm is tested using the same curves. Table~\ref{T:Table2} shows the total
running time of all $501\times 10000$ evaluations.
\end{example}

\begin{table*}[ht!]
\begin{center}
\renewcommand{\arraystretch}{1.45}
\begin{tabular}{lccccc}
 & & \multicolumn{2}{c}{B\'{e}zier curve} & 
\multicolumn{2}{c}{rational B\'{e}zier curve}\\
$n$ & $d$ &
\begin{minipage}[t]{2.5cm}
new method\\
(cf.~Alg.~\ref{A:Alg4})   
\end{minipage}
&
\begin{minipage}[t]{2.5cm}
de~Casteljau\\
(cf.~Appendix) 
\end{minipage}
&
\begin{minipage}[t]{2.5cm}
new method\\
(cf.~Alg.~\ref{A:Alg4})   
\end{minipage}
&
\begin{minipage}[t]{2.5cm}
de~Casteljau\\
(cf.~Appendix) 
\end{minipage}\\[3ex] \hline
1 & 2 & 2.654 & \textbf{2.603} & \textbf{2.672} & 2.685\\
 & 3 & 2.660 & \textbf{2.626} & \textbf{2.675} & 2.713 \\ \hline
2 & 2 & 2.760 & \textbf{2.694} & \textbf{2.773} & 2.846\\
 & 3 & \textbf{2.751} & 2.753 & \textbf{2.784} & 2.938 \\ \hline
3 & 2 & \textbf{2.848} & 2.854 & \textbf{2.872} & 3.070\\
 & 3 & \textbf{2.842} & 2.930 & \textbf{2.889} & 3.316 \\ \hline
4 & 2 & \textbf{2.954} & 3.028 & \textbf{2.989} & 3.380\\
 & 3 & \textbf{2.927} & 3.120 & \textbf{2.997} & 3.853 \\ \hline
5 & 2 & \textbf{3.017} & 3.238 & \textbf{3.094} & 3.868\\
 & 3 & \textbf{3.015} & 3.334 & \textbf{3.111} & 4.279 \\ \hline
6 & 2 & \textbf{3.113} & 3.551 & \textbf{3.203} & 4.201\\
 & 3 & \textbf{3.064} & 3.566 & \textbf{3.215} & 4.768 \\ \hline
10 & 2 & \textbf{3.468} & 4.447 & \textbf{3.593} & 6.549\\
 & 3 & \textbf{3.466} & 4.995 & \textbf{3.599} & 7.634 \\ \hline
15 & 2 & \textbf{3.830} & 6.194 & \textbf{4.057} & 10.979\\
 & 3 & \textbf{3.878} & 7.495 & \textbf{4.070} & 12.977 \\ \hline
20 & 2 & \textbf{4.241} & 8.461 & \textbf{4.527} & 17.261\\
 & 3 & \textbf{4.284} & 10.910 & \textbf{4.543} & 20.482 \\ \hline
\end{tabular}
\renewcommand{\arraystretch}{1}
\vspace{2ex}
\caption{Running times comparison (in seconds) for Example~\ref{E:Example1}. The
source code in C which was used to perform the tests is available at       
\texttt{http://www.ii.uni.wroc.pl/{\textasciitilde}pwo/programs/new-Bezier-eval-main.c}%
.}\label{T:Table2}
\vspace{-3ex}
\end{center}
\end{table*}

Observe that in the case of B\'{e}zier curves, the quantities $h$, which
are computed in the new algorithms, do not depend on the control points. One 
can use this fact in the fast evaluation of $M$ B\'{e}zier curves of the same 
degree $n$ for the same value of the parameter $t$. Such a~method requires
$(3dM+5)n+2$ \textsf{flops} while the direct use of the de Casteljau algorithm
means that all computations have to be repeated $M$ times, i.e., the number of
\textsf{flops} is equal to $3Mdn(n+1)/2+1$.

\begin{remark}
In rather rare cases $(h_k\approx1)$, the problem of \textit{cancellation of
digits} (\cite[\S2.3.4]{DB2008}) can occur while $1-h_k$ is computed 
(cf.~$h_1$ in~Algorithms~\ref{A:Alg3}, \ref{A:Alg4}). One can avoid this 
problem using the relation
$$
1-h_k=\frac{h_{k}}{h_{k-1}}\frac{\omega_{k-1}k(1-t)}{\omega_kt(n-k+1)}
\quad (1\leq k\leq n),
$$
if computations with high accuracy are necessary.
\end{remark}

%%%%%%%%%%%%%%%%%%%%%%%%%%%%%%%%%%%%%%%%%%%%%%%%%%%%%%%%%%%%%%%%%%%%%%%%%%%%%%
%%%%%%%%%%%%%%%%%%%%%%%%%%%%%%%%%%%%%%%%%%%%%%%%%%%%%%%%%%%%%%%%%%%%%%%%%%%%%%
\section{Remarks on evaluation of B\'{e}zier surfaces}     \label{S:NewEvalBS}
%%%%%%%%%%%%%%%%%%%%%%%%%%%%%%%%%%%%%%%%%%%%%%%%%%%%%%%%%%%%%%%%%%%%%%%%%%%%%%
%%%%%%%%%%%%%%%%%%%%%%%%%%%%%%%%%%%%%%%%%%%%%%%%%%%%%%%%%%%%%%%%%%%%%%%%%%%%%%

The method of evaluation described in Section~\ref{S:Introduction} can also
be applied to the rational rectangular and triangular B\'{e}zier surfaces.

Let $\p{S}_{mn}:[0,1]^2\rightarrow\mathbb E^d$ $(m,n,d\in\mathbb N)$ be
a \textit{rational rectangular B\'{e}zier surface} with the control points
$\p{W}_{ij}\in\mathbb E^d$ and weights $\omega_{ij}>0$ $(0\leq i\leq m,\ 
0\leq j\leq n)$,
$$
\p{S}_{mn}(s,t):=
  \frac{\displaystyle \sum_{i=0}^{m}
            \sum_{j=0}^{n}\omega_{ij}\p{W}_{ij}B^m_i(s)B^n_j(t)}
        {\displaystyle \sum_{i=0}^{m}
            \sum_{j=0}^{n}\omega_{ij}B^m_i(s)B^n_j(t)}.
$$ 

Define $T:=\{(s,t) : s,t\geq0,\,1-s-t\geq0\}$. Let there be given the control
points $\p{V}_{ij}\in\mathbb E^d$ and positive weights $v_{ij}$ 
$(0\leq i+j\leq n)$. Let $B^n_{ij}$ denotes the \textit{triangular Bernstein
polynomials}, 
$$
B^n_{ij}(s,t):=\frac{n!}{i!j!(n-i-j)!}s^it^j(1-s-t)^{n-i-j},
$$
where $0\leq i+j\leq n$. Let us consider a \textit{rational triangular 
B\'{e}zier surface} $\p{T}_n:T\rightarrow\mathbb E^d$ $(n,d\in\mathbb N)$ of 
the form
$$
\p{T}_n(s,t):=
  \frac{\displaystyle \sum_{i=0}^{n}
            \sum_{j=0}^{n-i}v_{ij}\p{V}_{ij}B^n_{ij}(s,t)}
        {\displaystyle \sum_{i=0}^{n}
            \sum_{j=0}^{n-i}v_{ij}B^n_{ij}(s,t)}.
$$

Both surface types are, in fact, \textit{rational parametric objects} 
(cf.~\eqref{E:ParObject}). Thus, one can apply Algorithm \ref{A:GenAlg} to
propose the methods which have geometric interpretations, compute only convex 
combinations of points and allow to evaluate B\'{e}zier surfaces in linear
time with respect to the number of control points, i.e., $O(nm)$ in the
\textit{rectangular} case and $O(n^2)$ in the \textit{triangular} case. To do
so, it is necessary to rearrange the sets of control points, corresponding
weights and basis functions (cf.~\eqref{E:BasisF}) into one-dimensional
sequences --- but since the method is agnostic of the ordering, the chosen
ordering is only a matter of preference. Taking into account that the
computations can be performed in many ways, we do not present \textit{rigorous}
algorithms and we pass some \textit{technical} details. 

In this section, to present a concise formulation of the methods, we
choose the \textit{row-by-row} order. For the reader's convenience, the 
analogues of quantities $h_k$ and points $\p{Q}_k$ from Algorithm 
\ref{A:GenAlg} have two indices instead, to correspond with the surfaces'
structure.

%%%%%%%%%%%%%%%%%%%%%%%%%%%%%%%%%%%%%%%%%%%%%%%%%%%%%%%%%%%%%%%%%%%%%%%%%%%%%%
\subsection{Rational rectangular B\'{e}zier surfaces}        \label{SS:RBSurf}
%%%%%%%%%%%%%%%%%%%%%%%%%%%%%%%%%%%%%%%%%%%%%%%%%%%%%%%%%%%%%%%%%%%%%%%%%%%%%%

Let $\p{S}_{mn}$ $(m,n\in\mathbb N)$ be a rational rectangular B\'{e}zier
surface with the weights $\omega_{ij}$ and control points 
$\p{W}_{ij}$ $(0\leq i\leq m,\, 0 \leq j \leq n)$.

In this case, one can interpret the set of control points as a rectangular grid
having $m+1$ rows with $n+1$ points in each row. We set the sequence of control
points so that:
\begin{itemize}
\itemsep1ex

\item the sequence begins with $\p{W}_{00}$,

\item $\p{W}_{i,j-1}$ is followed by $\p{W}_{ij}$ 
      $(0\leq i\leq m,\,1\leq j\leq n)$,

\item $\p{W}_{i-1,n}$ is followed by $\p{W}_{i0}$ $(1\leq i\leq m)$.
            
\end{itemize}
In a similar way, we set the sequences of weights $\omega_{ij}$ and basis
functions $B^m_i(s)B^n_j(t)$ $(0\leq i\leq m,\, 0 \leq j \leq n)$. 

It is well-known that if $(s,t)$ belongs to the boundary of the square 
$[0,1]^2$ then the point $\p{S}_{mn}(s,t)$ lies on the \textit{boundary}
rational B\'{e}zier curve with boundary control points and weights. Thus, the
method described in Section~\ref{SS:NewMethodBC} can be used in this case. 

Let us fix $(s,t)\in(0,1)^2$. Now, based on Algorithm \ref{A:GenAlg}, we define
the sequences of quantities $h_{ij}$ and points $\p{Q}_{ij}\in\mathbb E^d$ 
$(0\leq i\leq m,\,0 \leq j \leq n)$---determined in the order described 
above---in the following recurrent way:
$$
%\begin{array}{l}
h_{ij}:=\left\{
\begin{array}{l}
1\quad(i=j=0),\\[1.5ex]
\left(1+\frac{\displaystyle 
                         i\omega_{i-1,n}(1-s)t^n}
                     {\displaystyle   
                         m_i\omega_{i0}h_{i-1,n}s(1-t)^n}\right)^{-1}\\[2ex]
\hfill(i\neq0,\,j=0),\\[1.25ex]                          
\left(1+\frac{\displaystyle j\omega_{i,j-1}(1-t)}
             {\displaystyle n_j\omega_{ij}h_{i,j-1}t}\right)^{-1}\\[2ex]             
\hfill(\mbox{otherwise}),                       
\end{array}
\right.
$$
$$
\hspace{-1.11cm}
\p{Q}_{ij}:=\left\{
\begin{array}{l}
\p{W}_{00}\quad(i=j=0),\\[0.75ex]
(1-h_{i0})\p{Q}_{i-1,n}+h_{i0}\p{W}_{i0}\\[0.75ex]                          
\hfill(i\neq0,\,j=0),\\[0.75ex]
(1-h_{ij})\p{Q}_{i,j-1}+h_{ij}\p{W}_{ij}\\[0.75ex]            
\hfill(\mbox{otherwise}),                                              
\end{array}
\right.
%\end{array}
$$
\begin{comment}
$$
\left\{
\begin{array}{l}
h_{00}:=1,\quad \p{Q}_{00}:=\p{W}_{00},\\[1.25ex]
h_{i0}:=\left(1+\frac{\displaystyle 
                         i\omega_{i-1,n}(1-s)t^n}
                     {\displaystyle   
                         m_i\omega_{i0}h_{i-1,n}s(1-t)^n}\right)^{-1},\\[2.5ex]
\p{Q}_{i0}:=(1-h_{i0})\p{Q}_{i-1,n}+h_{i0}\p{W}_{i0}\\[1ex]                          
\hfill (1\leq i\leq m),\\[1.25ex]
h_{ij}:=\left(1+\frac{\displaystyle 
                         j\omega_{i,j-1}(1-t)}
                     {\displaystyle 
                         n_j\omega_{ij}h_{i,j-1}t}\right)^{-1},\\[2.5ex]
\p{Q}_{ij}:=(1-h_{ij})\p{Q}_{i,j-1}+h_{ij}\p{W}_{ij}\\[1ex]                         
\hfill (0\leq i\leq m,\,1\leq j\leq n),                       
\end{array}
\right.
$$
\end{comment}
where $0\leq i\leq m,\,0\leq j\leq n$, and $m_i:=m-i+1$, $n_j:=n-j+1$.

Theorem \ref{T:GenThm} implies that $\p{S}_{mn}(s,t)=\p{Q}_{mn}$. 
% The computational complexity of this method is $O(mn)$.

%%%%%%%%%%%%%%%%%%%%%%%%%%%%%%%%%%%%%%%%%%%%%%%%%%%%%%%%%%%%%%%%%%%%%%%%%%%%%%
\subsection{Rational triangular B\'{e}zier surfaces}         \label{SS:TBSurf}
%%%%%%%%%%%%%%%%%%%%%%%%%%%%%%%%%%%%%%%%%%%%%%%%%%%%%%%%%%%%%%%%%%%%%%%%%%%%%%

Suppose $\p{T}_n$ $(n\in\mathbb N)$ is a rational triangular B\'{e}zier
surface associated with the weights $v_{ij}$ and control points $\p{V}_{ij}$ 
$(0\leq i+j \leq n)$.

The method described below is analogous to the one for rectangular B\'{e}zier
surfaces. The main difference is that, in this case, the set of the control
points can be seen as a triangular grid, i.e., the number of control points in
each row depends on the row number. Namely, there are $n-i+1$ points in the
$i$th row $(0\leq i\leq n)$ of this triangular grid. We choose the following
ordering of control points:
\begin{itemize}
\itemsep1ex

\item the sequence begins with $\p{V}_{00}$,

\item $\p{V}_{i,j-1}$ is followed by $\p{V}_{ij}$ 
      $(0\leq i\leq n-1,\,1\leq j\leq n-i$),

\item $\p{V}_{i-1,n-i+1}$ is followed by $\p{V}_{i0}$ $(1\leq i\leq n)$.
      
\end{itemize}
We set the sequences of weights $v_{ij}$ and basis functions $B^n_{ij}(s,t)$
$(0\leq i+j \leq n)$ in the same way.

Assume $(s,t)$ is on the boundary of the triangle $T$. Then the point 
$\p{T}_n(s,t)$ lies on the \textit{boundary} rational B\'{e}zier curve having 
known control points and weights and, again, one can compute this point using
the method presented in Section~\ref{SS:NewMethodBC}.

Let us fix a point $(s,t)$ inside the triangle $T$. Similarly, based on 
Algorithm~\ref{A:GenAlg}, we introduce the sequences of quantities $g_{ij}$ and
points $\p{U}_{ij}\in\mathbb E^d$ $(0\leq i+j \leq n)$, which are computed in
the order described above, by the following recurrent formulas:
$$
%\begin{array}{l}
g_{ij}:=\left\{
\begin{array}{l}
1\quad(i=j=0),\\[1.5ex]
\left(1+\frac{\displaystyle iv_{i-1,n-i+1}t^{n-i+1}}
             {\displaystyle n_iv_{i0}g_{i-1,n-i+1}sr^{n-i}}\right)^{-1}\\[2ex]
\hfill(i\neq0,\,j=0),\\[1.25ex]                          
\left(1+\frac{\displaystyle jv_{i,j-1}r}
             {\displaystyle n_{i+j}v_{ij}g_{i,j-1}t}\right)^{-1}\\[2ex]             
\hfill(\mbox{otherwise}),                       
\end{array}
\right.\hfill
%\end{array}
$$
$$
%\begin{array}{l}
\hspace{-0.48cm}
\p{U}_{ij}:=\left\{
\begin{array}{l}
\p{V}_{00}\quad (i=j=0),\\[0.75ex]
(1-g_{i0})\p{U}_{i-1,n-i+1}+g_{i0}\p{V}_{i0}\\[0.75ex]                          
\hfill(i\neq0,\,j=0),\\[0.75ex]
(1-g_{ij})\p{U}_{i,j-1}+g_{ij}\p{V}_{ij}\\[0.75ex]            
\hfill(\mbox{otherwise}),                                              
\end{array}
\right.\hfill
%\end{array}
$$
\begin{comment}
$$
\left\{
\begin{array}{l}
g_{00}:=1,\quad \p{U}_{00}:=\p{V}_{00},\\[1.25ex]
g_{i0}:=\left(1+\frac{\displaystyle 
                         iv_{i-1,n-i+1}t^{n-i+1}}
                     {\displaystyle   
                         n_iv_{i0}g_{i-1,n-i+1}sr^{n-i}}
                                           \right)^{-1},\\[2.5ex]
\p{U}_{i0}:=(1-g_{i0})\p{U}_{i-1,n-i+1}+g_{i0}\p{V}_{i0}\\[1ex]                          
\hfill (1\leq i\leq n),\\[1.25ex]
g_{ij}:=\left(1+\frac{\displaystyle 
                         jv_{i,j-1}r}
                     {\displaystyle 
                         n_{i+j}v_{ij}g_{i,j-1}t}\right)^{-1},\\[2.5ex]
\p{U}_{ij}:=(1-g_{ij})\p{U}_{i,j-1}+g_{ij}\p{V}_{ij}\\[1ex]                         
\hfill (0\leq i\leq n-1,\,1\leq j\leq n-i),                       
\end{array}
\right.
$$
\end{comment}
where $0\leq i\leq n,\,0\leq j\leq n-i$, and $r:=1-s-t$, $n_l:=n-l+1$.

Then $\p{T}_n(s,t)=\p{U}_{n0}$, which follows from Theorem \ref{T:GenThm}.
% The computational complexity is $O(n^2)$ in this case.

%%%%%%%%%%%%%%%%%%%%%%%%%%%%%%%%%%%%%%%%%%%%%%%%%%%%%%%%%%%%%%%%%%%%%%%%%%%%%%
%%%%%%%%%%%%%%%%%%%%%%%%%%%%%%%%%%%%%%%%%%%%%%%%%%%%%%%%%%%%%%%%%%%%%%%%%%%%%%
\renewcommand{\thesection}{\Alph{section}}
\setcounter{section}{1}
\setcounter{algorithm}{0}
%%%%%%%%%%%%%%%%%%%%%%%%%%%%%%%%%%%%%%%%%%%%%%%%%%%%%%%%%%%%%%%%%%%%%%%%%%%%%%
%%%%%%%%%%%%%%%%%%%%%%%%%%%%%%%%%%%%%%%%%%%%%%%%%%%%%%%%%%%%%%%%%%%%%%%%%%%%%%
\section*{Appendix. Implementations of de Ca\-stel\-jau algorithms for 
          B\'{e}zier curves}                                                           
%%%%%%%%%%%%%%%%%%%%%%%%%%%%%%%%%%%%%%%%%%%%%%%%%%%%%%%%%%%%%%%%%%%%%%%%%%%%%%
%%%%%%%%%%%%%%%%%%%%%%%%%%%%%%%%%%%%%%%%%%%%%%%%%%%%%%%%%%%%%%%%%%%%%%%%%%%%%%

For the reader's convenience, let us also present efficient implementations
of de Casteljau algorithms %%% for B\'{e}zier curves 
which have $O(n)$ memory complexity. See Algorithms \ref{A:Alg6} and 
\ref{A:Alg5} (cf., e.g., \cite{Farin2002}). The %%% total 
numbers of \textsf{flops} for these methods are given in Table~\ref{T:Table1}.
% For the numbers of \textsf{flops}, see Table~\ref{T:Table1}.

\begin{algorithm}[ht!]
\caption{De Casteljau algorithm}\label{A:Alg6}
\begin{algorithmic}[1]
\Procedure {BEval}{$n, t, \p{W}$}
           %{B\'{e}zierEval}{$n, t, \p{W}$}

%\Statex

\State $t_1 \gets 1-t$

%\Statex

\For {$i \gets 0,n$}
       \State $\p{Q}_{i} \gets \p{W}_{i}$
\EndFor

%\Statex

\For {$k \gets 1,n$}
  \For {$i \gets 0,n-k$}
       \State $\p{Q}_{i} \gets t_1\cdot\p{Q}_{i}+t\cdot\p{Q}_{i+1}$
  \EndFor
\EndFor

%\Statex

\State \Return $\p{Q}_{0}$

%\Statex

\EndProcedure
\end{algorithmic}
\end{algorithm}

\begin{algorithm}[ht!]
\caption{Rational de Casteljau algorithm}\label{A:Alg5}
\begin{algorithmic}[1]
\Procedure {RatBEval}{$n, t, \omega, \p{W}$}
           %{RatB\'{e}zierEval}{$n, t, \omega, \p{W}$}
             
%\Statex

\State $t_1 \gets 1-t$

%\Statex

\For {$i \gets 0,n$}
       \State $w_{i} \gets \omega_{i}$
       \State $\p{Q}_{i} \gets \p{W}_{i}$
\EndFor

%\Statex

\For {$k \gets 1,n$}
  \For {$i \gets 0,n-k$}
       \State $u \gets t_1\cdot w_{i}$
       \State $v \gets t\cdot w_{i+1}$
       \State $w_{i} \gets u+v$
       \State $u \gets u/w_{i}$
       \State $v \gets 1-u$
       \State $\p{Q}_{i} \gets u\cdot\p{Q}_{i}+v\cdot\p{Q}_{i+1}$
  \EndFor
\EndFor

%\Statex

\State \Return $\p{Q}_{0}$

%\Statex

\EndProcedure
\end{algorithmic}
\end{algorithm}

%%%%%%%%%%%%%%%%%%%%%%%%%%%%%%%%%%%%%%%%%%%%%%%%%%%%%%%%%%%%%%%%%%%%%%%%%%%%%%
%%%%%%%%%%%%%%%%%%%%%%%%%%%%%%%%%%%%%%%%%%%%%%%%%%%%%%%%%%%%%%%%%%%%%%%%%%%%%%
\newpage
\bibliographystyle{elsart-num-sort}
\bibliography{rev-3-new-Bezier-eval-two-column}
%%%%%%%%%%%%%%%%%%%%%%%%%%%%%%%%%%%%%%%%%%%%%%%%%%%%%%%%%%%%%%%%%%%%%%%%%%%%%%
%%%%%%%%%%%%%%%%%%%%%%%%%%%%%%%%%%%%%%%%%%%%%%%%%%%%%%%%%%%%%%%%%%%%%%%%%%%%%%

%%%%%%%%%%%%%%%%%%%%%%%%%%%%%%%%%%%%%%%%%%%%%%%%%%%%%%%%%%%%%%%%%%%%%%%%%%%%%%
%%%%%%%%%%%%%%%%%%%%%%%%%%%%%%%%%%%%%%%%%%%%%%%%%%%%%%%%%%%%%%%%%%%%%%%%%%%%%%

\end{document}